\newtheorem{theorem}{Theorem}
\newcommand{\subqed}{{\ifmmode q.e.d. \else\unskip\nobreak\hfil
\penalty50\quad\null\nobreak\hfill $\blacksquare$ \parfillskip=0pt
\finalhyphendemerits=0\par\fi}}
\newcommand{\ml}{l\kern-0.55mm\char39\kern-0.3mm}
\setlist[description]{font=\normalfont\space}
\begin{document}

\begin{frontmatter}

\title{Brooks-type theorem for $r$-hued coloring of graphs}


\author{Stanislav Jendro{\ml}}
\ead{stanislav.jendrol@upjs.sk}

\author{Alfr\'ed Onderko}
\ead{alfred.onderko@student.upjs.sk}

\address{Institute of Mathematics, P.~J.~\v Saf\' arik University, Jesenn\'a 5, 040 01 Ko\v sice, Slovakia}

\begin{abstract}
An $r$-hued coloring of a simple graph $G$ is a proper coloring of its vertices such that every vertex $v$ is adjacent to at least $\min\{r, \deg(v)\}$ differently colored vertices. The minimum number of colors needed for an $r$-hued coloring of a graph $G$, the $r$-hued chromatic number, is denoted by $\chi_{r}(G)$. 
In this note we show that $$\chi_r(G) \leq (r - 1)(\Delta(G) + 1) + 2,$$
for every simple graph $G$ and every $r \geq 2$, which in the case when $r < \Delta(G)$ improves the presently known $\Delta(G)$-based upper bound on $\chi_r(G)$, namely $r \Delta(G) + 1$.

We also discuss the existence of graphs whose $r$-hued chromatic number is close to $(r-1)(\Delta + 1 ) + 2$ and we prove that there is a bipartite graph of maximum degree $\Delta$ whose $r$-hued chromatic number is $(r-1)\Delta + 1$ for every $r \in \{2, \dots, 9\}$ and infinitely many values of $\Delta \geq r + 2$; we believe that $(r-1)\Delta(G) + 1$ is the best upper bound on the $r$-hued chromatic number of any bipartite graph $G$.
\end{abstract}

\begin{keyword}
vertex-coloring, dynamic coloring, $r$-hued coloring, $r$-hued chromatic number, bipartite graph

\MSC[2010] 05C15 \sep 05C10

\end{keyword}

\end{frontmatter}

\section{Introduction}
Throughout this note we use graph theory terminology according to the book \cite{BoMu-08}. 
However, we recall the most frequent notions.
In this paper, $G$ is a connected simple graph with vertex set $V(G)$ and edge set $E(G)$. 
The {\it{degree}} of a vertex $v$, denoted by $\deg(v)$, is the number of edges incident to $v$. 
We use $N(v)$ for the set of neighbors of $v$ in G, and $N[v]$ for the set of vertices $N(v) \cup \{v\}$.
We use $\Delta(G)$ to denote the {\it{maximum vertex degree}} of $G$. 

A {\it{$k$-coloring}} of a given graph $G$ is an assignment $\phi: V(G) \rightarrow \{1, \dots, k\}$. 
For a $k$-coloring $\phi$ and a subset of vertices $U$ we use $\phi(U)$ to denote the set of colors used on vertices of $U$, i.e., $\phi(U) = \{ \phi(u) \colon u \in U\}$.

A {\it{proper $k$-coloring}} of a graph is a vertex $k$-coloring such that any two adjacent vertices receive distinct colors. The minimum number of colors needed for a proper coloring of a graph $G$, the {\it{chromatic number}}, is denoted by $\chi(G)$. This concept is generally well known.

Wegner \cite{Weg-77} in 1977 introduced $2$-distance coloring (also known as square coloring) as a coloring in which vertices at distance at most two receive distinct colors.
The problem is to find the minimum number of colors in a $2$-distance coloring of a graph $G$, which is equivalent to find the chromatic number of the square of $G$ ($G$ with added edges between pairs of vertices at distance 2).
Because of this fact, and to prevent confusion with the notation used for $r$-hued colorings, we use $\chi(G^2)$ to denote the minimum number of colors in a $2$-distance coloring of a graph $G$.
The concept of $2$-distance colorings is also well known and broadly studied, see, for example, \cite{BoDMP-21}, \cite{CCP-19}, and \cite{MoSa-05}.

In this paper we study $r$-hued colorings of graphs.
An {\it{$r$-hued coloring}} of a simple graph $G$ is a proper coloring $\phi$ of its vertices such that
$|\phi(N(v))| \geq \min\{r, deg_G(v)\}$ for every vertex $v \in V(G)$, i.e., at least $\min \{r, \deg(v)\}$  vertices in $N(v)$ are differently colored. 
The minimum number of colors needed for an $r$-hued coloring of a graph $G$, the {\it{$r$-hued chromatic number}}, is denoted by $\chi_r(G)$.

The concept of $r$-hued colorings (also known as $r$-dynamic colorings) was introduced by Montgomery \cite{Mon-01} in 2001. There is an extensive research motivated by this concept. For a recent results and surveys see  \cite{ChFL-22}, \cite{JKSW-16}, and \cite{LMRW-18}.

In this paper, in Section 2,  we show how to color vertices of a graph $G$, one by one, in order to obtain an $r$-hued coloring of $G$ with reasonably few colors, namely $(r - 1)(\Delta(G) + 1) + 2$ colors.
To refer the state of a partially colored graph in each step, we define partial coloring and partial $r$-hued coloring of $G$.

A \textit{partial $k$-coloring} of $G$ is any mapping $\phi$ from a subset of vertices of $G$ to $\{1, \dots k\}$; similar to classical vertex colorings of $G$ (where all vertices are colored), we use $\phi(U)$ to denote the set of colors used on vertices from $U$, i.e., $\phi(U) = \{ \phi(u) \colon u \in U \cap \mathrm{dom}(\phi)\}$, where $\mathrm{dom}(\phi)$ stands for the domain of $\phi$.

A \textit{partial $r$-hued $k$-coloring} of $G$ is a partial $k$-coloring $\phi$ of $G$ such that
\begin{description}
\item[i)] if $u$ and $v$ are adjacent vertices and $u,v \in \mathrm{dom(\phi)}$, then $\phi(u) \neq \phi(v)$,
\item[ii)] for each vertex $u$ of $G$, $|\phi(N_G(u))| \geq \min\{r, |N_G(u) \cap \mathrm{dom}(\phi)|\}$.
\end{description}

In the Section 3 we use Steiner systems to show that for particular values of $r$ but infinitely many $\Delta \geq r + 2$ there are graphs with maximum degree $\Delta$ and  $r$-hued chromatic number $(r-1)\Delta + 1$.

\section{General upper bound}

One of the most famous theorems in the chromatic graph theory, Brooks' theorem \cite{Bro-41} (see also \cite{BoMu-08}), states the following:
\begin{theorem}\rm\cite{Bro-41}
    If $G$ is a simple graph of maximum degree $\Delta(G)$, then $\chi(G) \leq \Delta(G) + 1$, with equality if and only if $G$ is a complete graph or an odd cycle. 
\end{theorem}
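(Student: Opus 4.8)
The plan is to separate the trivial inequality from the substantive equality characterisation, and to regard $\chi(G)\le\Delta(G)+1$ as a consequence of greedy colouring: order the vertices arbitrarily and colour them one at a time, always using the least colour not yet present on an already-coloured neighbour; since each vertex has at most $\Delta(G)$ neighbours, one of the colours $1,\dots,\Delta(G)+1$ is always free. For the easy direction of the equality, I would observe that $K_{\Delta+1}$ forces $\Delta+1$ colours because all its vertices are pairwise adjacent, and that an odd cycle has $\Delta=2$ and chromatic number $3=\Delta+1$; hence complete graphs and odd cycles attain the bound. (Recall $G$ is connected throughout, which is what makes the ``if and only if'' clean.)

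The content lies in the reverse direction: if the connected graph $G$ is neither complete nor an odd cycle, then $\chi(G)\le\Delta(G)$. First I would dispose of small maximum degree. If $\Delta(G)\le 1$ then $G$ is $K_1$ or $K_2$, both complete, hence excluded; if $\Delta(G)=2$ then $G$ is a path or a cycle, and a path or an even cycle is $2$-colourable while the only graphs requiring $3$ colours are the odd cycles, which are excluded. So I may assume $\Delta:=\Delta(G)\ge 3$. The whole engine is a greedy colouring carried out in a carefully chosen order, the aim being to guarantee that when each vertex is coloured it sees at most $\Delta-1$ distinct colours, so that a colour from $\{1,\dots,\Delta\}$ remains available.

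If $G$ is not $\Delta$-regular, I would pick a vertex $z$ with $\deg(z)<\Delta$, build a spanning tree rooted at $z$, and order the vertices so that $z$ is last and every other vertex precedes its parent; colouring greedily in this order, each vertex other than $z$ has at least one later neighbour and hence at most $\Delta-1$ earlier ones, while $z$ has degree $<\Delta$, so $\Delta$ colours suffice. If $G$ is $\Delta$-regular but has a cut vertex $w$, then inside each block $B$ the vertex $w$ has degree strictly less than $\Delta$, so $B$ is $\Delta$-colourable by the non-regular argument, and the block colourings can be permuted to agree on $w$ and glued together.

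There remains the core case: $G$ is $\Delta$-regular, $2$-connected, and not complete. Here the plan is to produce three vertices $a,b,c$ with $a,b\in N(c)$, $ab\notin E(G)$, and $G-\{a,b\}$ connected; I would then give $a$ and $b$ the same colour, order the rest by a spanning tree of $G-\{a,b\}$ rooted at $c$ so that $c$ is coloured last and every earlier vertex has a later neighbour, and colour greedily. Each intermediate vertex then sees at most $\Delta-1$ colours, while $c$ has $\Delta$ neighbours of which $a$ and $b$ share a colour, so again at most $\Delta-1$ colours appear on $N(c)$, and $\Delta$ colours suffice. The hard part, and the only place $2$-connectivity and non-completeness are genuinely used, is the existence of the triple: when $G$ is $3$-connected I would take any two vertices $a,b$ at distance $2$ with a common neighbour $c$ (such a pair exists because in a connected graph every one of whose open neighbourhoods is a clique is in fact complete, so some neighbourhood is not a clique), whereupon $G-\{a,b\}$ stays connected by $3$-connectivity; when $G$ is exactly $2$-connected I would choose $c$ so that $G-c$ has a cut vertex, take $a$ and $b$ to be neighbours of $c$ lying in two distinct endblocks of $G-c$ (such neighbours must exist, else the endblock's cut vertex would separate $G$), note $ab\notin E(G)$ since $a,b$ lie in different blocks, and verify that $G-\{a,b\}$ is connected by checking that deleting these two non-cut vertices of $G-c$ leaves $G-\{a,b,c\}$ connected and that $c$ reattaches through a surviving neighbour. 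Pinning down this last connectivity verification in the $2$-connected-but-not-$3$-connected subcase is the step I expect to demand the most care.
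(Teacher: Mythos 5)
This statement is Brooks' theorem itself, which the paper quotes as background from \cite{Bro-41} without giving any proof (it points instead to Lov\'asz's short proof in \cite{Lov-75}), so there is no in-paper argument to compare against; what you have written is essentially that standard Lov\'asz-style proof, and it is correct. Your case structure (greedy bound; $\Delta\le 2$ by inspection; non-regular case via a spanning tree rooted at a vertex of degree $<\Delta$; cut-vertex case by gluing; core case via nonadjacent $a,b\in N(c)$ precoloured alike with $G-\{a,b\}$ connected and $c$ coloured last) is exactly the classical route. Two small points of tightening. In the cut-vertex case, note that since $G$ is $\Delta$-regular, \emph{every} cut vertex lies in at least two blocks and hence has degree $<\Delta$ in each block containing it, so every block (each of which contains some cut vertex) is $\Delta$-colourable by your non-regular argument, and the colourings can be permuted to agree at all cut vertices along the block tree --- your phrasing fixes a single $w$, which is not quite enough on its own. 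The verification you flagged in the $2$-connected, non-$3$-connected subcase does go through cleanly: each endblock of $G-c$ is either $2$-connected or a $K_2$, so deleting one interior (non-cut) vertex from each of two \emph{distinct} endblocks leaves every block connected and the block tree intact, whence $G-\{a,b,c\}$ is connected; and since $\deg(c)=\Delta\ge 3$, the vertex $c$ has a neighbour other than $a$ and $b$, so $c$ reattaches and $G-\{a,b\}$ is connected. With those two observations spelled out, your outline is a complete and correct proof of the theorem.
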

This theorem, as well as its short proof provided  by Lovász in~\cite{Lov-75} are up to present inspiration to other authors concerning wide variety of vertex coloring problems, namely those which are based on adding another condition to a proper vertex coloring of a graph, and where the objective is to minimize the total number of colors used. 

For example, in the case of $2$-distance colorings, simple greedy algorithm shows that $\chi(G^2) \leq \Delta(G)^2 + 1$ for every simple graph $G$ (to color vertex $v$ at most $\Delta(G)$ colors used on neighbors of $v$ are forbidden to use, as well as at most $\Delta(G)(\Delta(G) - 1)$ colors used on neighbors of neighbors of $v$).
Diameter two cages such as the $5$-cycles, the Petersen graph and the Hoffman-Singleton graph (see \cite{BoMu-08}, page 84) show that there exist graphs that in fact require 
$\Delta^2 + 1$ colors for $\Delta = 2, 3$ and $7$, and possibly one for $\Delta = 57$. 

In the case of $r$-hued colorings, an easy observation shows that $\chi_1(G) \leq \chi_2(G) \leq \dots \chi_{\Delta(G)}(G) = \chi_{\Delta(G) + 1}(G)  = \dots$.
And since, evidently, $\chi_1(G) = \chi(G)$ and $\chi_{\Delta(G)}(G) = \chi(G^2)$, for every positive integer $r$ we have
$\chi(G) \leq \chi_r(G) \leq \Delta(G)^2+1$.

There is a broad research concerning $r$-hued colorings of graphs of particular families, improving the bounds on $r$-hued chromatic number and, in some cases, providing the exact value of it; see a recent survey \cite{ChFL-22}.

In this note we build upon the following result, which gives the upper bound on 
the $r$-hued chromatic number of a simple graph $G$ with the maximum degree $\Delta(G)$,
$\chi_r(G)$, for a simple graph $G$ in general. The bound in this theorem is sharp, however, only Moore graphs achieve it, and the authors of~\cite{JKSW-16} themselves ask for the best upper bound that holds for all but finitely many graphs.
\begin{theorem}{\rm\cite{JKSW-16}}\label{thm:3d10}
    Let $G$ be a graph. 
    Then $\chi_r(G) \leq r\Delta(G) + 1$,  with equality for $r \geq 2$ if and only if $G$ is regular with diameter $2$ and girth $5$.
\end{theorem}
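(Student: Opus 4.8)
The plan is to prove the inequality by a greedy extension in the partial $r$-hued framework just introduced, and then to analyze when that argument can be tight. Fix any ordering $v_1,\dots,v_n$ of $V(G)$ and color the vertices one at a time, maintaining after every step that the current map is a partial $r$-hued $(r\Delta+1)$-coloring. Coloring a vertex $v$ can only endanger conditions (i) and (ii) at $v$ itself and at its neighbors; condition (ii) at $v$ is unaffected, since $v$'s colored-neighbor count does not change, so it suffices to bound the colors forbidden for $\phi(v)$ by its neighbors. Consider a neighbor $u$ of $v$. If $u$ is already colored, properness forbids the single color $\phi(u)$. In addition, if $u$ is not yet $r$-colorful, i.e.\ $|\phi(N(u))|<r$, then coloring $v$ raises $|N(u)\cap\mathrm{dom}(\phi)|$, and to keep (ii) at $u$ we must give $v$ a color outside $\phi(N(u))$, which rules out at most $r-1$ further colors; since $\phi(u)\notin\phi(N(u))$, the neighbor $u$ forbids at most $r$ colors in total. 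Summing over the at most $\Delta$ neighbors of $v$ leaves at most $r\Delta$ forbidden colors, so one of the $r\Delta+1$ colors is always available. When the process ends $\mathrm{dom}(\phi)=V(G)$, and invariant (ii) becomes $|\phi(N(u))|\ge\min\{r,\deg(u)\}$ for every $u$, so $\phi$ is a genuine $r$-hued coloring; this gives $\chi_r(G)\le r\Delta+1$.

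For the characterization, the tight regime is $r=\Delta(G)$, where $\chi_\Delta(G)=\chi(G^2)$ as recalled above and $r\Delta+1=\Delta^2+1$. If $G$ is $\Delta$-regular of diameter $2$ and girth $5$, then $G$ is a Moore graph: it has exactly $\Delta^2+1$ vertices, every two of which are within distance $2$, so $G^2=K_{\Delta^2+1}$ and $\chi(G^2)=\Delta^2+1=\Delta\cdot\Delta+1$, giving equality. This settles the ``if'' direction.

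The ``only if'' direction is where I expect the main difficulty. I would argue that equality forces the greedy count above to be tight at some vertex $v$: namely $\deg(v)=\Delta$, so $G$ is regular, and the color sets $\{\phi(u)\}\cup\phi(N(u))$ charged to distinct neighbors $u$ of $v$ must be pairwise disjoint, which should translate into the absence of triangles and of $4$-cycles, hence girth at least $5$, together with diameter $2$. The delicate point is that a single ordering yields only the upper bound, so necessity has to be obtained by a direct second-neighborhood counting argument on a hypothetical $r$-hued coloring using only $r\Delta$ colors, showing that dropping any one of regularity, girth $5$, or diameter $2$ strictly lowers the requirement below $r\Delta+1$. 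Controlling how the hypothesis $r\ge2$ interacts with this structure, in particular that the extremal graphs appear precisely when $r=\Delta$ while the $r=1$ case degenerates to the Brooks extremal families, is the subtle part of the argument.
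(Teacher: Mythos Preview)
The paper does not prove Theorem~\ref{thm:3d10}; it is quoted from~\cite{JKSW-16} as a known result, so there is no in-paper proof to compare your proposal against. Your greedy argument for the inequality $\chi_r(G)\le r\Delta(G)+1$ is correct and is the standard one: each neighbor $u$ of the vertex being colored forbids at most one color for properness and, if $|\phi(N(u))|<r$, at most $r-1$ further colors to preserve condition~(ii) at $u$; these contributions are disjoint because the partial coloring is proper, so at most $r\Delta$ colors are excluded in total.

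The characterization of equality, however, is only sketched. Your ``if'' direction tacitly assumes $r=\Delta(G)$; that is indeed the relevant regime (a Moore graph $G$ with $\Delta(G)>r$ does \emph{not} attain equality---by Theorem~\ref{thm:r=2} the Petersen graph already has $\chi_2\le 4<7=2\cdot3+1$), but you have not argued why equality forces $r=\Delta(G)$, so the two directions as you have split them do not match up. More seriously, for the ``only if'' direction you give only a heuristic: tightness of the greedy count at a single vertex under a single ordering yields no lower bound on $\chi_r(G)$. What is actually needed is a direct argument that any graph failing one of regularity, girth~$5$, or diameter~$2$ admits an $r$-hued coloring with at most $r\Delta$ colors. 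You correctly identify this as the delicate step, but the proposal does not carry it out.
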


\begin{theorem}{\rm\cite{LMP-03}}\label{thm:r=2}
    Let $G$ be a connected graph. If $\Delta(G) \geq 3$, 
    then $\chi_2(G) \leq \Delta(G) + 1$.
\end{theorem}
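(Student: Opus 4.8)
The plan is to start from an ordinary proper coloring with $\Delta(G)+1$ colors and to repair it locally until it becomes a $2$-hued coloring. Call a vertex $v$ \emph{bad} (with respect to a proper coloring $\phi$) if $\deg(v) \ge 2$ and all neighbors of $v$ receive the same color, i.e.\ $|\phi(N(v))| = 1$; a proper coloring is $2$-hued precisely when it has no bad vertex. First I would dispose of the case $G = K_{\Delta(G)+1}$: here any proper $(\Delta(G)+1)$-coloring gives every vertex all $\Delta(G) \ge 3$ remaining colors in its neighborhood, so it is already $2$-hued and the bound holds with equality. For every other connected $G$ with $\Delta(G) \ge 3$, Brooks' Theorem (Theorem~1; odd cycles are excluded since they have maximum degree $2$) yields a proper $\Delta(G)$-coloring, so among the $\Delta(G)+1$ available colors at least one is initially unused.

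Among all proper colorings of $G$ with the palette $\{1, \dots, \Delta(G)+1\}$ I would fix one, say $\phi$, that minimizes the number of bad vertices, and then argue by contradiction that this minimum is $0$. Suppose $v$ is bad and all its neighbors carry color $c$. The repair operation is to recolor a single neighbor $u \in N(v)$ by a color $c' \ne c$: this destroys the badness of $v$ (which now sees both $c$ and $c'$), so it suffices to find such a move that keeps $\phi$ proper and creates no new bad vertex, contradicting minimality. Two elementary facts drive the analysis. Since $u$ has color $c$ and $\phi$ is proper, no neighbor of $u$ has color $c$, so $c$ is always a color free at $u$; and because $u$ has at most $\Delta(G)$ neighbors, the set $F_u$ of colors free at $u$ is non-empty, with $c \in F_u$. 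Moreover, recoloring $u$ leaves the neighborhood palette of $u$ itself and of every non-neighbor of $u$ untouched, so it cannot change their badness; it can turn a neighbor $w$ of $u$ bad only if, after the change, all neighbors of $w$ share one color, which forces every neighbor of $w$ other than $u$ to already carry the new color $c'$.

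The easy regime is when some neighbor $u$ of $v$ has $|F_u| \ge 2$, so several colors $c' \ne c$ are free at $u$: each neighbor $w$ of $u$ is endangered only by the single color shared by all its other neighbors, giving at most $\Delta(G)$ forbidden values, and a counting argument over the free colors and the $\ge 2$ monochromatic neighbors of $v$ should yield a safe $c'$. A particularly clean instance is recoloring $u$ by a color absent from $N(w)$ for every relevant $w$ — for example a globally unused color, made available by the Brooks step — since introducing a brand-new color into $N(w)$ can only increase $|\phi(N(w))|$ and hence creates no bad vertex. The main obstacle is the blocked regime, where every neighbor $u$ of $v$ satisfies $F_u = \{c\}$, i.e.\ $u$ has degree exactly $\Delta(G)$ and its $\Delta(G)$ neighbors realize all colors different from $c$, so no neighbor of $v$ admits a direct recoloring. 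Here I would pass from single-vertex recoloring to a Kempe-type exchange: for a suitable color $d \ne c$, swap the colors $c$ and $d$ on the connected two-colored (Kempe) component meeting $u$, an operation that preserves properness automatically and, for the right choice of $d$ and component, breaks the monochromatic neighborhood of $v$ without increasing the number of bad vertices. Proving that such a $d$ and such a component always exist — using $\deg(v) \ge 2$, connectedness, and $G \ne K_{\Delta(G)+1}$ — together with the bookkeeping that certifies the exchange produces no new bad vertex, is the delicate point where I expect the real work of the proof to lie.
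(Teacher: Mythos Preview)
The paper does not prove this theorem at all: it is stated as a known result, cited from Lai--Montgomery--Poon~[LMP-03], and immediately followed by the paper's own (weaker for $r=2$) Theorem~\ref{mainthm-1}. So there is no ``paper's proof'' to compare against; your sketch can only be judged on its own merits and against the original source.

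Your overall plan --- take a proper $(\Delta+1)$-coloring minimizing the number of bad vertices and repair locally --- is indeed the scheme behind the original proof. Two concrete gaps, however, keep your write-up from being a proof. First, the ``globally unused color from Brooks'' does not survive your extremal choice: you minimize bad vertices over \emph{all} proper colorings with palette $\{1,\dots,\Delta+1\}$, and the minimizer may well use every color, so the clean move ``recolor $u$ by an unused color'' is not available in general. The fallback ``a counting argument \dots\ should yield a safe $c'$'' is not carried out, and in fact the naive count (at most $\Delta-1$ dangerous colors from $N(u)\setminus\{v\}$, plus the properness constraints, against $|F_u|\ge 2$ free colors) does not close by itself. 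Second, in the blocked regime you only announce a Kempe-exchange strategy and explicitly defer the analysis; but that analysis --- showing some $(c,d)$-swap breaks $v$'s monochromatic neighborhood without manufacturing a new bad vertex elsewhere --- is exactly the substantive content of the theorem, and you have not supplied it.

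In short: the framework is right, but both the ``easy'' and the ``hard'' cases are only outlined, and the Brooks step as you use it is a red herring once you pass to the extremal coloring. If you want to complete this line, one workable fix is to minimize a finer potential (for example, bad vertices lexicographically together with a secondary quantity that lets a single recoloring strictly improve even when it creates one new bad vertex), which is closer to how the argument in~[LMP-03] actually runs.
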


For some other Brooks' type bounds of $r$-hued chromatic number for specific $r$ and (or) specific families of graphs see \cite{Kar-11}, \cite{LMP-03}, and \cite{ChFL-22}. 

Our main result improves the result of Theorem~\ref{thm:3d10} in the case when $\Delta(G) > r$; as $\chi_r(G) = \chi(G^2)$ if $r \geq \Delta(G)$, the case when $\Delta(G) > r$ is the interesting one. We have:

\begin{theorem}\label{mainthm-1}
    If $G$ is a simple graph and $r \geq 2$, then $\chi_r(G) \leq (r - 1)(\Delta(G) + 1) + 2$.
\end{theorem}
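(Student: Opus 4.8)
The plan is to colour the vertices of $G$ one at a time and to maintain, after each step, a partial $r$-hued colouring in the sense of condition ii). The guiding observation is that condition ii) is equivalent to a purely local invariant: for every vertex $u$, the first $\min\{r,\deg(u)\}$ neighbours of $u$ that receive a colour should be pairwise differently coloured. Once a vertex $u$ has $r$ differently coloured neighbours it is ``satisfied'' and imposes no further restriction; moreover the diversity requirement of the vertex currently being coloured is met automatically later, when its own neighbours are coloured under this invariant, so one never has to look ahead on behalf of the current vertex. Thus it suffices to show that, at each step, fewer than $(r-1)(\Delta+1)+2$ colours are forbidden for the vertex $v$ we are about to colour.

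Accounting for the forbidden colours, a colour is excluded for $v$ for exactly one of two reasons: properness, meaning it already occurs on a coloured neighbour of $v$; or diversity, meaning that using it on $v$ would break the invariant at some already coloured, still unsatisfied neighbour $u$ of $v$, i.e.\ it coincides with the colour of one of the at most $r-1$ coloured neighbours of $u$. Writing $N^-(w)$ for the set of coloured neighbours of $w$, the forbidden set is contained in $\phi(N^-(v))\cup\bigcup_{u}\phi(N^-(u))$, the union ranging over the unsatisfied coloured neighbours $u$ of $v$. The first term has size at most $\deg(v)\le\Delta$, and I would shave one off it by processing the vertices in a connected (BFS-type) order, so that every vertex except the last has an as-yet-uncolored neighbour and hence at most $\Delta-1$ coloured neighbours; the final vertex is treated separately in the usual Brooks fashion, which is where the additive ``$+2$'' is spent. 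The degenerate case $\deg(v)<r$, where the invariant just says that all of $N(v)$ must be rainbow, folds into the same scheme, and the small-$r$ anchoring is provided by Theorem~\ref{thm:r=2}.

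The crux of the whole argument is the diversity term $\bigcup_{u}\phi(N^-(u))$. A naive estimate allows up to $r-1$ forbidden colours for each of the up to $\Delta-1$ unsatisfied neighbours, i.e.\ roughly $(r-1)\Delta$ on top of the $\Delta$ proper constraints; this only reproduces the bound $r\Delta+1$ of Theorem~\ref{thm:3d10} and is in fact attained by a worst-case configuration when $\Delta>2r$. To obtain the coefficient $(r-1)$ rather than $r$ in front of $\Delta$, one must use that the colours counted in the diversity term sit on vertices that we ourselves coloured earlier, so there is genuine freedom to force colour coincidences among them. Concretely I would fix in advance, for each vertex $u$, which $\min\{r,\deg(u)\}$ of its neighbours serve as its rainbow ``witnesses'', choosing these witness sets so that no vertex is forced to witness for too many of its neighbours, and reusing colours freely among the non-witness neighbours. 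This is cleanest to phrase via the supergraph $G'$ obtained from $G$ by turning each witness set into a clique: an $r$-hued colouring of $G$ is then just a proper colouring of $G'$, and the task reduces to bounding $\Delta(G')$.

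The main obstacle is precisely this load-balancing. A high-degree vertex $w$ needs only $r$ witnesses of its own but may be demanded as a witness by all of its up to $\Delta$ neighbours, each such demand costing $w$ up to $r-1$ new edges in $G'$; if one let that happen, $\Delta(G')$ would reach $r\Delta$. Exploiting the slack that every $u$ needs only $r$ witnesses among its $\deg(u)$ neighbours, the demands can be redistributed (an orientation / bounded in-degree argument) so that each vertex $w$ satisfies $\deg_{G'}(w)\le (r-1)(\Delta+1)+1$; applying the greedy/Brooks bound to $G'$ then yields the theorem. Carrying out this redistribution in a way that is compatible with the connected ordering and with the separate treatment of the last vertex, and verifying it uniformly for all $\deg(v)<r$ as well as $\deg(v)\ge r$, is the delicate part I expect to require the most care.
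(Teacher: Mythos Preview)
Your supergraph/load-balancing step is where the argument breaks. The claim that witness sets can always be chosen so that $\deg_{G'}(w)\le (r-1)(\Delta+1)+1$ is false, and the obstruction is exactly the case you flag as ``delicate'': vertices of degree exactly $r$ have \emph{no} slack, so their witness set is forced to be all of $N(u)$. Concretely, take a vertex $w$ of degree $\Delta$ whose neighbours $u_1,\dots,u_\Delta$ each have degree $r$, with the second neighbourhoods pairwise disjoint (a tree suffices). Then $W_{u_i}=N(u_i)$ is forced, so $w\in W_{u_i}$ for every $i$, and in $G'$ the vertex $w$ acquires an edge to each of the $(r-1)\Delta$ second neighbours; hence $\deg_{G'}(w)=\Delta+(r-1)\Delta=r\Delta$, which exceeds $(r-1)(\Delta+1)+1$ as soon as $\Delta>r$. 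No orientation or redistribution can help here because there is nothing to redistribute. Thus your reduction to a static supergraph only recovers the $r\Delta+1$ bound of Theorem~\ref{thm:3d10}, not the improved one.

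The paper's proof sidesteps this by a genuinely dynamic device that your framework cannot express: when the vertex $v$ being processed has at least $r$ ``weak'' neighbours (neighbours currently seeing at most $r-1$ colours), it first \emph{uncolours} all weak neighbours of $v$, colours $v$ avoiding only $\phi(S(v))\cup\phi(N(W(v)))$ (at most $\Delta(r-1)$ colours), and then recolours the weak neighbours one by one, forcing the first $r$ of them to be rainbow. The separate Case~1 ($|W(v)|\le r-1$) needs at most $\Delta+(r-1)^2$ forbidden colours. Both counts fit under $(r-1)(\Delta+1)+2$ when $2\le r\le\Delta$; the regime $r>\Delta$ is handled by Theorem~\ref{thm:3d10}. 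There is no BFS ordering, no ``last vertex'' Brooks trick, and no pre-committed witness structure: the uncolouring step is the missing idea.
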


\begin{proof}
    For the sake of simpler formulae, let $\Delta = \Delta(G)$ and let $A = \{1, \dots, (r - 1)(\Delta + 1) + 2\}$.

    First note that if $r > \Delta$ then $(r-1)(\Delta + 1) + 2 > r\Delta + 1$. 
    Since $\chi_r(G) \leq r \Delta + 1$ by Theorem~\ref{thm:3d10}, $\chi_r(G) \leq (r-1)(\Delta + 1) + 2$ as well.
    Hence, throughout the following, we only consider the case when $2 \leq r \leq \Delta$.
    
    We proceed in a greedy way. 
    Let $U$ be the set of uncolored vertices; $U = V(G)$ at the beginning.
    In each step, until $U$ is empty, we pick a vertex from $U$ at random, and proceed to the coloring phase.
    We distinguish two major cases, for which the following notation is used (note that $\phi$ is the partial $r$-hued coloring of $G$ at the given step):
    \begin{align*}
        S(v) &= \{ x \in N(v) \colon |\phi(N(x))| \geq r\}, \\
        W(v) &= \{ x \in N(v) \colon |\phi(N(x))| \leq r-1\}. 
    \end{align*}
    We say in the following that $u$ is a strong neighbor if $u \in S(v)$, and we say  that $u$ is a weak neighbor of $v$ if $u \in W(v)$.

    \textbf{Case 1.} If $|W(v)| \leq r-1$, we color $v$ with a color which is different from all colors of $\big( \phi(S(v)) \cup \phi(W(v)) \cup \phi(N(W(v))) \big)$.
    Since at most $r-1$ colors are used on neighbors of each weak neighbor of $v$, we have $|\phi(N(W(v)))| \leq |W(v)| (r-1) \leq (r-1)^2$.
    Moreover, $|\phi(S(v)) \cup \phi(W(v))| \leq |S(v) \cup W(v)| = \deg(v) \leq \Delta$, and thus at most $\Delta + (r-1)^2$ colors cannot be used to color $v$.
    We briefly show that $|A| \geq \Delta + (r-1)^2 + 1$, hence, there is a color to be used on $v$, which use extends $\phi$ into a partial $r$-hued coloring of $G$ with more vertices colored.
    
    We have
    $|A| - \big(\Delta + (r-1)^2 + 1\big) = \big((r - 1)(\Delta(G) + 1) + 2\big) - \big(\Delta + (r-1)^2 + 1\big) = -r^2 + (3 + \Delta)r -2-2\Delta = (\Delta + 1 - r)(r-2) \geq 0$, since $2 \leq r \leq \Delta$.
    
    We remove $v$ from $U$.
    
    \textbf{Case 2.} If $|W(v)| \geq r$, we first remove colors from colored weak vertices of $v$. 
    We then color the vertex $v$ itself, and afterwards, we color each of the weak neighbors of $v$, regardless whether the neighbor was colored or not before this step.
    To color $v$, we omit colors from $\big( \phi(S(v)) \cup \phi(N(W(v))) \big)$; by doing so we add a new color to the neighborhood of each weak neighbor, while the resulting partial coloring remains proper.
    Evidently, $|\phi(S(v)) \cup \phi(N(W(v)))| \leq \Delta (r-1)$, the worst case being the case when $\deg(v) = \Delta$ and there is exactly $r-1$ colors on neighbors of each neighbor of $v$, i.e.,  $N(v) =W(v)$.
    Since $|A| \geq 1 + \Delta (r-1)$, there is a color to be used on $v$.
    
    We now proceed into coloring vertices from $W(v)$;
    let there be a linear ordering of these vertices, say $w_1, \dots, w_k$.
    Let $j$ be an integer, $1 \leq j \leq r$. To color the vertex $w_j$, assuming all vertices $w_\ell$ for $1 \leq \ell \leq j-1$ are colored,
    we use the color from 
    \begin{align}\label{weak_neighbor_color}
       A \setminus \big( \{\phi(v)\} \cup \phi(N(w_j) \setminus \{v\}) \cup \phi(W(N(w_j) \setminus \{v\})) \cup \bigcup\limits_{\ell = 1}^{j-1} \phi(w_\ell)  \big). 
    \end{align}
    For the clarity of the proof, we count the sizes of the sets in~(\ref{weak_neighbor_color}).
    Evidently $|\{\phi(v)\}| =1$, and since $w_j$ was (at the begining of this step) a weak neighbor of $v$, $|\phi(N(w_j) \setminus \{v\})| \leq r-1$.
    Then, $|\phi(N(W(w_j) \setminus \{v\}))| \leq (\Delta - 1)(r-1)$, as the number of weak neighbors of $w_j$ different from $v$ is at most $\Delta - 1$, and each weak neighbor sees at most $r-1$ colors on its respective neighbors.
    Finally, $| \bigcup\limits_{\ell = 1}^{j-1} \phi(w_j)| \leq r-1$.
    Considering all of these sets and their sizes, at most $1 + (r-1) + (\Delta-1)(r-1) + (r-1) = 1 + (\Delta + 1)(r-1)$ colors are forbidden to use on $w_j$ for each $j \in \{1, \dots r\}$.
    Since, however, $|A| \geq 2 + (\Delta + 1)(r-1)$, there is a color to be used on each such vertex $w_j$.
    
    For each $j \in \{r+1, \dots, k\}$ we color the vertex $w_j$ with a color from
    $$ A \setminus \big( \phi(v) \cup \phi(N(w_j) \setminus \{v\}) \cup \phi(W(N(w_j)\setminus\{v\}) \big).$$
    Observe that in this case we do not have to explicitly forbid colors used on other vertices from $W(v)$ as there are at least $r$ mutually distinct colors used on neighbors of $v$, namely on $w_1, \dots, w_r$.
    Since $\big| \phi(v) \cup \phi(N(w_j) \setminus \{v\}) \cup \phi(W(N(w_j)\setminus\{v\}) \big| \leq 1 + (r-1) + (\Delta - 1)(r-1)$, there is a color to be used on $w_j$.
    Remove $v$ and all vertices from $W(v)$ from $U$.
    
    Note that, in each step, if at most $r$ neighbors of $x$ are already colored, then they are colored in a rainbow way, i.e., each of them has different color.
    Moreover, in Case 2, weak neighbors of $v$ are colored in a way that at least $r$ of them receive mutually different colors, hence, $r$-hued property holds for the vertex $v$.
    It follows from these observations that the produced coloring is in each step partial $r$-hued coloring of $G$; thus, the resulting coloring is an $r$-hued coloring of $G$.
\end{proof}

Evidently, in the case when $\Delta(G) = r$ we have $(r - 1)(\Delta(G) + 1) + 2 = r \Delta(G) +1$.
Hence, the bound in Theorem~\ref{mainthm-1} is sharp, for the same reason as the bound in Theorem~\ref{thm:3d10}, i.e., 2-distance chromatic number of a Moore graph $G$ is equal to $\Delta(G)^2 + 1$.
However, as authors of~\cite{JKSW-16} mentioned, there is only a finite number of Moore graphs, and therefore, there is a space for improvement of the currently known upper bounds after excluding some finite family of graphs such as Moore graphs.


\section{Bipartite graphs needing $(r-1)\Delta + 1$ colors}

As we mentioned before, only graphs needing $r\Delta + 1$ colors are Moore graphs.
Moreover, only examples of graphs which are known to need $r \Delta$ colors in an $r$-hued coloring are cycles whose length is not divisible by 3, in the case when $r=2$.
In~\cite{JKSW-16} a way to obtain a graph $G$ which needs $r\Delta(G) - 1$ colors is described. 
The construction involves removing some edges from several copies of an $r$-regular Moore graph and then adding new ones, resulting in an $r$-regular graph.
Even though this construction yields infinitely many such graphs, they all have their maximum degree equal to $r$, which is 2, 3 or 7 (or possibly 57 if a 57-regular Moore graph exists).
Hence, for $r \notin \{2,3,7,57\}$, the problem of finding an infinite family of graphs which would have their $r$-hued chromatic number close (up to a constant) to $r\Delta + 1$ or $(r-1)(\Delta + 1) + 2$ remains open.

In design theory, Steiner system $S(t, r, n)$ is a pair $(V, \mathcal{B})$ where $V$ is an $n$-element set and $\mathcal{B}$ is a family of $r$-element subsets of $V$, called blocks, such that each $t$-element subset of $V$ is contained in exactly one block. The necessary conditions for the existence of a Steiner system  $S(t, r, n)$ with $b$ $k$-element subsets are:
$n - 1 = k(r - 1)$ and $nk = br$.

Consider now a Levi graph~\cite{COXETER-50} (or incidence graph) of a Steiner system $(V, \mathcal{B}) = S(2,r,n)$, i.e., a bipartite graph $G$ with vertices $V \cup \mathcal{B}$ and edges between every $v \in V$ and $B \in \mathcal{B}$ if and only if $v \in B$.
As there is some block $B \in \mathcal{B}$ which contains both $v_1$ and $v_2$, for every pair of vertices from $V$, and $\deg_G(B) = r$ for every $B \in \mathcal{B}$, vertices from $V$ are colored with mutually distinct colors in any $r$-hued coloring of $G$.
Thus, $\chi_r(G) \geq |V| = n$, and, since evidently there is not any 4-cycle in $G$ (two points of $V$ are contained in exactly one block in $S(2,r,n)$), we have the following:

\begin{theorem}\label{thm_steiner}
    Let $r \geq 2$ be a positive integer. If Steiner system $S(2,r,n)$ exists, there is a biregular bipartite graph $G$ of girth at least 6 with $\chi_r(G) \geq n$. 
\end{theorem}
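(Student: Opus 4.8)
The plan is to take $G$ to be the Levi graph (incidence graph) of the Steiner system $(V,\mathcal{B}) = S(2,r,n)$ already described above: the vertex set is $V \cup \mathcal{B}$, and a point $v \in V$ is joined to a block $B \in \mathcal{B}$ precisely when $v \in B$. By construction $G$ is bipartite with parts $V$ and $\mathcal{B}$, so after fixing the graph it only remains to verify the three asserted properties, namely biregularity, girth at least $6$, and $\chi_r(G) \geq n$.

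Biregularity I would read off directly from the parameters of the design: each block $B$ is an $r$-element subset of $V$, so $\deg_G(B) = r$ for every $B \in \mathcal{B}$, while the standard incidence count for $S(2,r,n)$ (the relation $n-1 = k(r-1)$ recalled above) shows that every point lies in exactly $k = (n-1)/(r-1)$ blocks, giving $\deg_G(v) = k$ for every $v \in V$. For the girth, since $G$ is bipartite every cycle has even length, so it suffices to rule out $4$-cycles. A $4$-cycle would read $v_1 B_1 v_2 B_2$ with $v_1, v_2 \in B_1 \cap B_2$ and $B_1 \neq B_2$; but then the pair $\{v_1, v_2\}$ would be contained in two distinct blocks, contradicting the defining property of $S(2,r,n)$ that each $2$-element subset of $V$ lies in exactly one block. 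Hence $G$ has no $4$-cycle and its girth is at least $6$.

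The heart of the argument is the bound $\chi_r(G) \geq n$, and this is where I would concentrate. Fix any $r$-hued coloring $\phi$ of $G$ and consider an arbitrary block $B \in \mathcal{B}$. Since $\deg_G(B) = r$, the $r$-hued condition forces $|\phi(N(B))| \geq \min\{r, \deg_G(B)\} = r$; as $B$ has exactly $r$ neighbours, this means all points of $B$ receive pairwise distinct colors. For any two distinct points $v_1, v_2 \in V$ the Steiner property supplies a block $B$ with $v_1, v_2 \in B$, and the previous observation applied to this $B$ gives $\phi(v_1) \neq \phi(v_2)$. Thus $\phi$ is injective on $V$, at least $|V| = n$ colors are used, and $\chi_r(G) \geq n$.

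The routine checks (bipartiteness, the incidence count for biregularity, and the absence of $4$-cycles) present no real difficulty; the one place where the hypotheses are used in full is the lower bound, and I expect the main point to be conceptual rather than computational. Specifically, the exact equality $\deg_G(B) = r$ is what matters: it is precisely because a block has degree equal to $r$, and not larger, that $\min\{r,\deg_G(B)\} = r$ upgrades the $r$-hued constraint at $B$ into a full rainbow constraint on its entire neighbourhood. The remaining step is then the purely combinatorial one of combining these constraints over all blocks, using the $2$-covering property of the design, to conclude that the whole point set $V$ is rainbow-colored.
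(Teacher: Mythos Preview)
Your proposal is correct and follows essentially the same argument as the paper: take the Levi graph of $S(2,r,n)$, observe that each block has degree exactly $r$ so the $r$-hued condition forces its neighbourhood to be rainbow, and then use the $2$-covering property to conclude that all of $V$ is rainbow-colored. Your treatment is slightly more explicit about biregularity (computing $k=(n-1)/(r-1)$) and the girth argument, but the approach is identical.
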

\begin{theorem}\label{mainthm-2}
    For every $r \in\{2, \dots, 9\}$ and infinitely many values of $\Delta \geq r + 2$, there is a graph $G$ with $\Delta(G) = \Delta$ and
    $$\chi_r(G) = (r - 1)\Delta(G) + 1.$$
\end{theorem}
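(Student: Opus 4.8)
The plan is to realise the extremal graphs as Levi graphs of Steiner systems $S(2,r,n)$ and to prove the two matching bounds separately. Fix $r \in \{2,\dots,9\}$ and let $(V,\mathcal{B}) = S(2,r,n)$ be a Steiner system (its existence for infinitely many $n$ is discussed below), with Levi graph $G$. Each point of $V$ lies in exactly $k = (n-1)/(r-1)$ blocks, so in $G$ every vertex of $V$ has degree $k$ while every vertex of $\mathcal{B}$ has degree $r$. Choosing $n \geq r^2 + r - 1$ forces $k \geq r+2 > r$, hence $\Delta(G) = k$ and, rearranging, $n = (r-1)\Delta(G) + 1$. The lower bound $\chi_r(G) \geq n = (r-1)\Delta(G) + 1$ is exactly Theorem~\ref{thm_steiner}, so the whole statement reduces to the upper bound $\chi_r(G) \leq n$, that is, to producing an $r$-hued coloring of $G$ with the $n$ colors $\{1,\dots,n\}$.

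First I would colour the points bijectively, $\phi|_V\colon V \to \{1,\dots,n\}$. Since $G$ is bipartite with parts $V$ and $\mathcal{B}$, the only remaining properness constraints are between a block and the $r$ points it contains, and every block $B$ (of degree exactly $r$) automatically satisfies the $r$-hued condition because its $r$ neighbours already carry $r$ distinct colours. Thus the task collapses to colouring the blocks so that (a) each block $B$ avoids the $r$ colours of its points, and (b) every point $v \in V$ sees at least $r$ distinct colours among the $k = \Delta$ blocks through it. Equivalently, I must assign to every block $B$ a colour from its list $L(B)$ of size $n-r$ so that the colours around each point span at least $r$ values.

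For step (b) I would argue probabilistically. Colour each block independently and uniformly at random from its list $L(B)$. For a fixed point $v$, the bad event that the $k$ blocks through $v$ use at most $r-1$ colours has probability at most $\binom{n}{r-1}\bigl((r-1)/(n-r)\bigr)^{k}$; because $k = (n-1)/(r-1)$ grows linearly in $n$ while $r$ stays bounded, this is smaller than any fixed negative power of $n$, so a union bound over the $n$ points leaves a valid colouring with positive probability once $n$ is large. This yields $\chi_r(G) \leq n$ and hence equality, and the same estimate succeeds for every sufficiently large admissible $n$, so $\Delta = k$ ranges over infinitely many values $\geq r+2$.

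The outer quantifier over $r \in \{2,\dots,9\}$ then rests on the existence of infinitely many $n$ for which $S(2,r,n)$ exists and $n$ is large. This is classical design theory: the divisibility conditions $n \equiv 1 \pmod{r-1}$ and $n(n-1) \equiv 0 \pmod{r(r-1)}$ hold for infinitely many $n$, and by Wilson's existence theorem all sufficiently large such $n$ admit an $S(2,r,n)$. The main obstacle I expect is step (b): one must rule out, simultaneously at every point, the degenerate colourings in which the incident blocks collapse onto fewer than $r$ colours while still respecting the list restriction (a). The probabilistic estimate above is the cleanest way I see to control all points at once; a purely explicit block colouring would instead require a Hall-type system of distinct representatives selecting $r$ ``rainbow'' blocks per point, whose consistency across overlapping point-neighbourhoods is the delicate issue and is presumably what confines an explicit construction to small $r$.
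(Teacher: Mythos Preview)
Your argument is correct but proceeds differently from the paper on the upper-bound step. Both you and the paper take $G$ to be the Levi graph of an $S(2,r,n)$ with $n=(r-1)\Delta+1$ and read off the lower bound from Theorem~\ref{thm_steiner}. For the matching upper bound, however, the paper does not build a colouring from scratch: it starts from an arbitrary $r$-hued colouring $\phi$ (in which the points of $V$ are automatically rainbow), picks any block $y$ with $\phi(y)\notin\phi(V)$, and shows by a short counting argument that $y$ can be recoloured with some colour in $\phi(V)$ while preserving the $r$-hued condition at each point in $N(y)$; iterating eliminates all colours outside $\phi(V)$. This recolouring works deterministically for every $\Delta\ge r+2$. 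Your route --- bijectively colour $V$, then colour each block uniformly from its list of $n-r$ remaining colours and bound $\Pr\bigl[|\phi(N(v))|\le r-1\bigr]\le\binom{n}{r-1}\bigl((r-1)/(n-r)\bigr)^{k}$ for each point $v$ --- is also valid, though it only kicks in for $\Delta$ (equivalently $n$) sufficiently large; that still yields infinitely many values of $\Delta$, so the theorem follows.

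On the existence side, the paper quotes explicit constructions for each $r\in\{2,\dots,9\}$, whereas you invoke Wilson's theorem; the latter in fact gives $S(2,r,n)$ for all sufficiently large admissible $n$ and \emph{every} $r\ge 2$, so your argument actually proves the statement without the restriction $r\le 9$. Your closing guess that the restriction to small $r$ comes from difficulties in an explicit block-colouring is therefore a misattribution: the paper's recolouring step is explicit and $r$-uniform, and the bound $r\le 9$ stems only from the authors' choice to cite concrete Steiner-system results rather than Wilson's asymptotic theorem.
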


\begin{proof}
    From Theorem \ref{thm_steiner} it follows that there are graphs, namely Levi graphs of Steiner systems $S(2,r,n)$, whose $r$-hued chromatic number is at least $n$.
    Note that if we were to ask the Levi graph of $(V, \mathcal{B}) = S(2,r,n)$ to be of maximum degree $\Delta$, then $n \leq \Delta(r-1) + 1$, since the number of distinct neighbors of neighbors of any vertex from $V$ is at most $\Delta (r-1)$.
    It is easy to see that a Steiner system $S(2,2,n)$ exists for every $n \geq 3$.
    In the case of $r \in \{3, 4, 5\}$ a complete characterization of Steiner systems $S(2,r,n)$ was provided by Bose~\cite{BOSE-39} and Skolem~\cite{SKOLEM-58} for $r=3$, by Hanani~\cite{Hanani-61} for $r=4$, and by Hanani~\cite{Hanani-75} for $r=5$. Namely:
    
    \begin{description}
    \item $S(2,3,n)$ exists whenever $n \equiv 1  \text{ or } 3 \mod 6$ \cite{Col-06},
    \item $S(2,4,n)$ exists whenever $n \equiv 1 \text{ or } 4 \mod 12$ \cite{AbGr-06},
    \item $S(2,5,n)$ exists whenever $n \equiv 1 \text{ or } 5 \mod 20$ \cite{AbGr-06}.
    \end{description}
    
    For $r \in \{6, 7, 8, 9\}$ sufficient conditions on existence of Steiner systems $S(2,r,n)$ are known due to Abel and Greig~\cite{AbelGreig-97} in the case when $r=6$, Janko and Tonchev~\cite{JanTon-98} in the case when $r=7$,  and due to Greig~\cite{Greig-01} in the case when $r \in \{7,8,9\}$:
    \begin{description}
    \item $S(2,6,n)$ exists if $n \geq 802$, $n \equiv 1  \text{ or } 6 \mod 15$,
    \item $S(2,7,n)$ exists if $n \geq 2606$, $n \equiv 1 \text{ or } 7 \mod 42$,
    \item $S(2,8,n)$ exists if $n \geq 3754$,
    $n \equiv 1 \text{ or } 8 \mod 56$,
    \item $S(2,9,n)$ exists if $n \geq 16498$, $n \equiv 1 \text{ or } 9 \mod 72$.
    \end{description}
     Hence, a biregular bipartite graph $G$ of girth at least 6 with $\Delta(G) =\Delta$ and 
    $\chi_r(G) \geq (r - 1)\Delta + 1$
    exists for infinitely many values of $\Delta$ and any $r \in \{3, \dots, 9\}$. 
    For example
    there is a biregular bipartite graph $G$ with $\chi_3(G) \geq 2\Delta + 1$ for every $\Delta$ congruent to $0, 1, 3$ or $4 \mod 6$;
    and there is a biregular bipartite graph $G$ with $\chi_4(G) \geq 3\Delta + 1$ for every $\Delta$ congruent to $0, 1, 4, 5, 8$, or $9 \mod 12$.

    We now show that Levi graph of any Steiner system $S(2,r, (r-1)\Delta +1)$ for $\Delta \geq r+2$ has its $r$-hued chromatic number equal to $(r-1)\Delta + 1$.
    Suppose to the contrary that there is a Levi graph $G$ of Steiner system $(V, \mathcal{B}) = S(2,r,(r-1)\Delta + 1)$ with $\chi_r(G) = k \geq (r-1)\Delta + 2$.
    As $|V| = (r - 1)\Delta(G) + 1$ and, by the above, $|\phi(V)| = (r - 1)\Delta(G) + 1$, there is an $r$-vertex $y \in \mathcal{B}$ with $\phi(y) \notin \phi(V)$.

    We show that there is a color in $\phi(V)$ that can be used to recolor $y$ in order to get an $r$-hued coloring of $G$ with less $r$-vertices having color not in $\phi(V)$. 
    We distinguish two cases. 

    \textbf{Case 1.} Let there be $\ell \geq 1$ and some vertices $z_1, \dots, z_\ell$ in $N(y)$ such that $|\phi(N(z_i) \setminus \{y\})| \geq r$ for every $i \in \{1, \dots, \ell\}$.
    Denote by $A$ the set of colors which cannot be used on $y$, i.e.,
    $$A = \{\phi(z_1), \dots, \phi(z_\ell)\} \cup \bigcup_{w \in N(y) \setminus \{z_1, \dots, z_\ell\}} \phi(N[w] \setminus \{y\}).$$
    To show that there is a color from $\phi(V)$ to be used on $y$ without violating the $r$-hued condition, it is enough that the size of $A$ is smaller than that of $\phi(V)$.
    But it certainly is, as $|A| \leq \ell + (r-\ell)r \leq 1 + (r-1)r$, hence,
    $$|\phi(V) \setminus A| \geq (r - 1)\Delta(G) + 1 - \big(1 + (r - 1)r\big) = (r - 1)(\Delta(G) - r) \geq 1.$$

    \textbf{Case 2.} Suppose that $|\phi(N(z) \setminus \{y\})| = r - 1$ for every vertex $z \in N(y)$.
    In this case, colors from $A^* =  \bigcup_{w \in N(y) \setminus \{z\}} \phi(N(w) \setminus \{y\})$ cannot be used on $y$.
    We have $|A^*| \leq r^2$ and consequently
    $$|\phi(V) \setminus A^*| \geq (r - 1)\Delta(G) + 1 - \big(r + r(r - 1)\big) = (r - 1)(\Delta(G) - r - 1) \geq 1,$$
    since $\Delta(G) = \Delta \geq r+2$.
    Thus, in this case, there is also a color from $\phi(V)$ to be used on $y$ in order to obtain an $r$-hued coloring of $G$ with less vertices colored with colors not in $\phi(V)$

    If there is no other $r$-vertex $w$ with $\phi(w) \notin \phi(V)$, the obtained $r$-hued coloring is a required $\big((r - 1)\Delta(G) + 1\big)$-coloring of $G$.
    If there is another $r$-vertex $w$ with $\phi(w) \notin \phi(V)$ we repeat the above procedure until we obtain a required $r$-hued $\big((r - 1)\Delta(G) + 1\big)$-coloring of $G$.

\end{proof}

There are other two special Steiner systems $S(2,n+1,n^2+n+1)$ and $S(2,n,n^2)$ , known as projective and affine plane of order $n$, respectively.
Using Theorem~\ref{thm_steiner}, the existence of a projective plane of order $r$ implies the existence of a bipartite biregular graph $G$ of girth at least 6 with $\Delta(G) = r$ and $\chi_r(G) \geq (r-1)\Delta + 1$, and the existence of an affine plane of order $r$ implies the existence of a bipartite biregular graph $G$ of girth at least 6 with $\Delta(G) = r+1$ and $\chi_r(G) \geq (r-1) \Delta + 1$.
It is known that if $n$ is a power of a prime, there exists a projective plane of order $n$, and therefore an affine plane of order $n$ as well (since it is a residual design to a projective plane), see~\cite{Veblen-1906}.

As mentioned before, $S(2,r,(r-1)\Delta + 1)$ does not exist if necessary conditions are not satisfied, i.e., $(r-1)\Delta \neq r(r - 1)$.
However if $(r-1)\Delta \neq r(r - 1)$, one could possibly construct a biregular bipartite graph with bipartition $X$ and $Y$ such that $\deg(x) = r$ for every $x \in X$ and $\deg(y) = \Delta$ for every $y \in Y$, which would have the property that vertices in $Y$ are colored with mutually different colors in any $r$-hued coloring, and size of $Y$ is as close to $(r-1)\Delta + 1$, as reasonably possible. Namely, we ask

\smallskip
\noindent
\textbf{Question 1.} \textit{Is it true that for every $r$ and $\Delta$, $r \leq \Delta$, there is a biregular bipartite graph with $\chi_r(G) \geq (r-1)\Delta + 1 -z$, where $z \in \{0, \dots, r-1\}$?}
\smallskip

And, concerning the exact number of colors in such graphs:

\smallskip
\noindent
\textbf{Question 2.} \textit{Is it true, that $\chi_r(G) \leq (r-1)\Delta(G) + 1$ for every bipartite graph $G$?}
\smallskip

We believe that answers to both of these questions are positive.
Note that Question 1 is equivalent to the problem of existence of some not necessarily balanced designs.
The exact number of colors in a Levi graph of any Steiner system $S(2,r,(r-1)\Delta + 1)$ is $(r-1)\Delta + 1$, which supports the possible positive answer in Question 2.
 

\bigskip
{ \bf Declaration of competing interest}

\medskip
The authors declare that they have no known competing financial interests or personal relationships that could have appeared to influence the work on this paper.

\bigskip
{\bf Acknowledgments}

\medskip
The authors are thankful to professor Alex Rosa for sharing his knowledge on problems concerning the theory of Steiner systems.

This work was supported by the Slovak Research and Development Agency under the Contract No. APVV-19-0153, by the Science Grant Agency - project VEGA 1/0574/21,
and by UPJŠ internal grant vvgs-pf-2022-2137.

\end{document}